\theoremstyle{plain}
\newtheorem{thm}{Theorem}[section]
\newtheorem{lem}[thm]{Lemma}
\newtheorem{cor}[thm]{Corollary}
\theoremstyle{definition}
\theoremstyle{remark}
\newcommand{\R}{\mathbb{R}}
\newcommand{\N}{\mathbb{N}}
\DeclareMathOperator{\Hom}{Hom}
\DeclareMathOperator{\Ker}{Ker}
\DeclareMathOperator{\Ima}{Im}
\begin{document}
\title[Decomposition of persistence modules]{Decomposition of pointwise finite-dimensional persistence modules}

\author{William Crawley-Boevey}
\address{Department of Pure Mathematics, University of Leeds, Leeds LS2 9JT, UK}
\email{w.crawley-boevey@leeds.ac.uk}

\subjclass[2010]{16G20}
\keywords{Persistence module, Persistent homology}

\begin{abstract}
We show that a persistence module (for a totally ordered indexing set) consisting of finite-dimensional
vector spaces is a direct sum of interval modules.
The result extends to persistence modules with the descending chain condition on images and kernels.
\end{abstract}
\maketitle

\section{Introduction}
We fix a base field $k$ and a totally ordered indexing set $(R,<)$, and
assume throughout that $R$ has a countable subset which is dense in the order topology on $R$.
A \emph{persistence module} $V$ is a functor from $R$, considered in the natural way as a category,
to the category of vector spaces. Thus it consists of vector spaces $V_t$ for $t\in R$ and
linear maps $\rho_{ts}:V_s\to V_t$ for $s\le t$, satisfying $\rho_{ts} \rho_{sr} = \rho_{tr}$ for all $r\le s\le t$
and $\rho_{tt} = 1_{V_t}$ for all $t$.
We say that a persistence module $V$ is \emph{pointwise finite-dimensional} if all $V_t$ are finite-dimensional.
A subset $I\subseteq R$ is an \emph{interval} if it is non-empty
and $r\le s\le t$ with $r,t\in I$ implies $s\in I$.
The corresponding \emph{interval module} $V=k_I$ is given by $V_t = k$ for $t\in I$, $V_t=0$ for $t\notin I$
and $\rho_{ts}=1$ for $s,t\in I$ with $s\le t$.

Persistence modules indexed by the natural numbers were introduced in \cite{ZC} as algebraic objects
underlying persistent homology, but other indexing sets are of interest, especially the
real numbers---see \cite{CZCG,CCSGGO,CdSGO,CdSO,L}.
As an example, to analyse a set of data points in Euclidean space, fixing $n$,
one obtains a persistence module indexed by the positive real numbers 
by taking $V_t$ to be the $n$-th homology of a union of closed balls 
of radius $t$ centred on the points. (A more computational efficient construction 
is to use the Vietoris-Rips complex.)
Assuming that this persistence module is a direct sum of interval modules, it is found that the
collection of such intervals, the \emph{barcode}, is a useful invariant of the data;
see for example \cite{CZCG,Gh}.
Our aim is to give a short proof of the following result, which enables the use of barcodes
for all pointwise finite-dimensional persistence module.

\begin{thm}
\label{t:ptwise}
Any pointwise finite-dimensional persistence module is a direct sum of interval modules.
\end{thm}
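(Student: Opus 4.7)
My strategy is to prove the theorem by splitting off interval summands one at a time via a maximality argument, reducing the work to a single \emph{splitting lemma}: for every pointwise finite-dimensional persistence module $V$ and every nonzero $v \in V_t$, there exist an interval $I \ni t$ and a direct summand $W \cong k_I$ of $V$ whose component of $v$ is nonzero. Granting this lemma, I would take a Zorn-maximal family of interval submodules of $V$ whose internal sum is a direct summand, and argue, using pointwise finite-dimensionality to extract pointwise complements that assemble into a persistence submodule, that this sum exhausts $V$; otherwise the splitting lemma applied to the complement enlarges the family, contradicting maximality.

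To prove the splitting lemma, starting from $0 \ne v \in V_t$, I would consider the natural candidate interval
\[ I = \{ s \le t : v \in \rho_{ts}(V_s) \} \cup \{ s \ge t : \rho_{st}(v) \ne 0 \}, \]
which is easily verified to be an interval containing $t$. The plan is to construct both an inclusion $\iota : k_I \hookrightarrow V$ and a surjection $\pi : V \twoheadrightarrow k_I$ with $\iota_t(1) = v$ and $\pi_t(v) \ne 0$, arranged so that the composite $\pi \circ \iota$ is a nonzero endomorphism of $k_I$. Since $\mathrm{End}(k_I) = k$, such a composite is an isomorphism, and then $W := \iota(k_I)$ is automatically a direct summand of $V$. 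For $\iota$, one must choose compatible preimages $v_s \in V_s$ of $v$ under $\rho_{ts}$ for $s \le t$ in $I$; for $\pi$, dually one must choose compatible hyperplanes $H_s \subset V_s$ avoiding the image of $v$; on the $s \ge t$ half of $I$ the data are forced by $\rho_{st}(v)$ and its cokernel.

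The main obstacle is organizing these choices coherently. Here the two hypotheses of the paper, pointwise finite-dimensionality and the existence of a countable dense subset $D \subseteq R$, enter in tandem: enumerate $I \cap D$, construct the $v_s$ and $H_s$ inductively along this countable chain using finite-dimensionality at each step to satisfy the finitely many compatibility constraints imposed by earlier stages, and then extend the data to arbitrary $s \in I$ by a density/limit argument. A further delicate point is forcing $\pi \iota \ne 0$: a naive construction can make $\iota$ and $\pi$ land on ``different'' interval summands of the localized picture at $t$, collapsing the composite to zero. I expect one must either build $\iota$ and $\pi$ together in a single induction that keeps $\pi_t(\iota_t(1)) = \pi_t(v)$ fixed throughout, or invoke a Fitting-style minimality argument on the finite-dimensional space $V_t$ to pick $\iota, \pi$ matching on a chosen line through $v$. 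This alignment between the monomorphic and epimorphic data is, I expect, the technical heart of the proof.
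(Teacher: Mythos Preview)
Your splitting lemma is true (it is an immediate consequence of the theorem), but the specific ``natural candidate interval'' you propose is wrong, and this breaks the construction of $\pi$ entirely, not merely the alignment of $\pi$ with $\iota$.

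Take $R=\{1,2,3\}$, $V_1=k$, $V_2=k^2$, $V_3=k$, with $\rho_{21}(1)=e_1$ and $\rho_{32}(e_1)=1$, $\rho_{32}(e_2)=0$, so that $V\cong k_{\{1,2,3\}}\oplus k_{\{2\}}$. Let $t=2$ and $v=e_1+e_2$. Your interval is
\[
I=\{s\le 2: v\in\rho_{2s}(V_s)\}\cup\{s\ge 2:\rho_{s2}(v)\neq 0\}=\{2,3\},
\]
since $v\notin\Ima\rho_{21}=ke_1$ while $\rho_{32}(v)=1\neq 0$. But $k_{\{2,3\}}$ is not a summand of $V$; in fact $\Hom(V,k_{\{2,3\}})=0$. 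Indeed, any $\pi:V\to k_{\{2,3\}}$ must satisfy $\pi_2(e_1)=\pi_2\rho_{21}(1)=0$ (because $(k_I)_1=0$) and $\pi_2=\pi_3\rho_{32}$, which forces $\pi_2(e_2)=\pi_3(0)=0$ as well, so $\pi_2=0$ and then $\pi_3=0$. Thus no $\pi$ with $\pi_t(v)\neq 0$ exists, and your programme cannot even get started for this $v$. The phenomenon is that $v$ may lie in $\Ima^-_{\ell t}+\Ker^-_{ut}$ for the cuts $\ell,u$ determined by your $I$, which is precisely the obstruction to the existence of such a $\pi$; your definition of $I$ only guarantees $v\notin\Ima^-_{\ell t}$ and $v\notin\Ker^-_{ut}$ separately.

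The paper avoids this by not trying to attach an interval to a vector. Instead it works the other way: for \emph{every} interval $I$ it defines subspaces $V^-_{It}\subseteq V^+_{It}\subseteq V_t$ using both cuts simultaneously, proves via a ``disjoint covering by sections'' argument (Lemmas~\ref{l:dirsumcond}--\ref{l:jksections}) that complements $W_{It}$ to $V^-_{It}$ in $V^+_{It}$ assemble into submodules with $V_t=\bigoplus_I W_{It}$, and realises these complements globally as images of an inverse limit. In the example above this machinery correctly detects that $V^+_{\{2,3\},2}/V^-_{\{2,3\},2}=0$, while $v$ has nonzero image in $V^+_{\{1,2,3\},2}/V^-_{\{1,2,3\},2}$. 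The point is that the ``right'' interval for $v$ is encoded not by where $v$ individually lifts or survives, but by this finer filtration; your approach would need to discover this filtration in disguise to repair the choice of $I$.

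A secondary issue: your Zorn step asserts that pointwise complements ``assemble into a persistence submodule'', which is false for an arbitrary submodule (e.g.\ the submodule $0\subset k\subset 0$ of $k\xrightarrow{1}k\to 0$ is not a summand). One can run a correct transfinite argument by maximising over pairs $(W,C)$ with $V=W\oplus C$ and $W$ a sum of interval modules, using pointwise finite-dimensionality to handle limit stages, but this needs to be said.
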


The result was already known for finite indexing sets, for persistence modules of `finite type' 
indexed by $\N$ or $\R$, see \cite[\S 5.2]{CZCG}, and, using work of Webb \cite{W}, for
pointwise finite-dimensional persistence modules indexed by a locally finite subset of $\R$, 
see \cite[Theorem 1.4]{CdSGO}.

More generally, we say that $V$ has the
\emph{descending chain condition on images and kernels} provided that for all $t,s_1,s_2,\ldots \in R$
with $t\ge s_1> s_2> \dots$, the chain
\[
V_t \supseteq \Ima \rho_{ts_1} \supseteq \Ima \rho_{ts_2} \supseteq \dots
\]
stabilizes, and for all $t,r_1,r_2,\ldots \in R$
with $t\le \dots < r_2 < r_1$, the chain
\[
V_t \supseteq \Ker \rho_{r_1t} \supseteq \Ker \rho_{r_2t} \supseteq \dots
\]
stabilizes.

\begin{thm}
\label{t:dcc}
Any persistence module with the descending chain condition on images and kernels is a direct sum of interval modules.
\end{thm}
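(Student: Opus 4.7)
The plan is to derive Theorem~\ref{t:dcc} from Theorem~\ref{t:ptwise} by showing that any persistence module $V$ satisfying the descending chain condition on images and kernels decomposes as a direct sum of pointwise finite-dimensional submodules, each of which then decomposes into interval modules by Theorem~\ref{t:ptwise}. Note first that pointwise finite-dimensionality automatically implies the DCC on images and kernels (all descending chains of subspaces of a finite-dimensional space terminate), so Theorem~\ref{t:dcc} is a genuine generalization and the reduction strategy is internally consistent.

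The central step is to prove that for every nonzero $v \in V_t$ there exists a pointwise finite-dimensional direct summand of $V$ containing $v$. The naive submodule $\langle v\rangle$ generated by $v$, whose $s$-component is $k\cdot\rho_{st}(v)$ for $s\ge t$ and zero for $s<t$, is already pointwise at most one-dimensional but need not be a direct summand. To enlarge it I would use the DCC on images to stabilize, for each $s\le t$, the chain $\Ima\rho_{ts'}$ as $s'$ runs over a coinitial decreasing sequence below $t$, and then select compatible finite-dimensional preimages of $v$ inside the various $V_{s'}$. Symmetrically, DCC on kernels controls the behaviour above $t$, letting one choose finite-dimensional complements to the stable kernels. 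The hypothesis that $R$ carries a countable dense subset should be what permits these local choices to be assembled coherently across all indices.

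Granting the central step, a Zorn's lemma argument applied to the poset of families of independent pointwise finite-dimensional direct summands of $V$ produces a maximal such family, and the central step guarantees that the maximal family exhausts $V$, yielding a decomposition $V=\bigoplus_i U_i$ with each $U_i$ pointwise finite-dimensional. Applying Theorem~\ref{t:ptwise} to each $U_i$ then gives the desired interval decomposition of $V$.

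I expect the principal obstacle to be the central step: constructing the pointwise finite-dimensional direct summand. Both halves of the DCC hypothesis are essential, and the finite-dimensional complements selected at different indices must be compatible with every transition map $\rho_{ts}$ simultaneously—a global coherence problem rather than a pointwise one. Should this direct construction prove intractable, the alternative plan is to retrace the argument used for Theorem~\ref{t:ptwise} and verify that each appeal to pointwise finite-dimensionality can be replaced by an appeal to the DCC, on the grounds that finite-dimensionality is presumably used there only to terminate descending chains of subspaces of individual $V_t$, and those are exactly the chains that the DCC on images and kernels is designed to terminate.
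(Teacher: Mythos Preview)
Your reduction is circular in this paper: Theorem~\ref{t:ptwise} is not proved independently but is the special case of Theorem~\ref{t:dcc} where every $V_t$ is finite-dimensional. All of Sections~2--7 are developed under the DCC hypothesis from the outset, and Theorem~\ref{t:dcc} is proved directly. Even granting an external proof of Theorem~\ref{t:ptwise}, your argument has two genuine gaps. The ``central step'' is only a plan: the coherence problem you yourself flag---making the local finite-dimensional choices compatible with \emph{all} transition maps simultaneously---is precisely the difficulty, and nothing in the sketch resolves it. And the Zorn step does not close: if $\{U_i\}$ is a maximal independent family of pointwise finite-dimensional direct summands and $v\notin\bigoplus_i U_i$, the central step hands you some summand $W\ni v$, but $W$ may meet $\bigoplus_i U_i$ nontrivially, so it cannot simply be adjoined to the family and maximality is not contradicted.

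Your ``alternative plan'' is exactly what the paper does. The only place pointwise finite-dimensionality would enter is to terminate descending chains of images $\Ima\rho_{ts}$ and kernels $\Ker\rho_{rt}$ inside a fixed $V_t$, and that is precisely what the DCC hypothesis supplies (Lemma~\ref{l:Jrealized}). The paper therefore works under DCC throughout: for each interval $I$ it builds an explicit submodule $W_I$ isomorphic to a sum of copies of $k_I$, and then shows $V=\bigoplus_I W_I$ by a pointwise ``sections'' argument (Lemmas~\ref{l:dirsumcond}, \ref{l:refine} and~\ref{l:jksections}).
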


Our proof of Theorem \ref{t:dcc} uses a variant of the 
`functorial filtration' method, which we learnt from \cite{R}.
The situation here is rather simpler, however, with no `band modules', and
we haven't found it necessary to exploit functoriality.

Since the endomorphism ring of an interval module $k_I$ is $k$, a local ring,
it follows from the Krull-Remak-Schmidt-Azumaya theorem that
the multiplicity of $k_I$ as a summand of $V$ is uniquely determined. 
Our method of proof shows that this is
also the dimension of a certain vector space $V^+_I/V^-_I$.

One can study purity for persistence modules, as they form a locally
finitely presented abelian category, see for example \cite[\S 1.2, Theorem]{CBlfp}.
In this language we prove that a persistence module $V$
satisfies the descending chain condition on images and kernels
if and only if it is $\Sigma$-pure-injective, that is, any direct sum of copies of $V$ is pure-injective.

\section{Cuts}
For the rest of the paper, let $V$ be a persistence module with the 
descending chain condition on images and kernels.
A \emph{cut} for $R$ is a pair $c=(c^-,c^+)$ of subsets of $R$
such that $R = c^- \cup c^+$ and $s<t$ for all $s\in c^-$ and $t\in c^+$.
If $c$ is a cut with $t\in c^+$, we define subspaces $\Ima^-_{ct} \subseteq \Ima^+_{ct} \subseteq V_t$ by
\[
\Ima^-_{ct} = \bigcup_{\substack{s\in c^-}} \Ima \rho_{ts},
\qquad
\Ima^+_{ct} = \bigcap_{\substack{s\in c^+ \\ s\le t}} \Ima \rho_{ts},
\]
and if $c$ is a cut with $t\in c^-$, we define subspaces $\Ker^-_{ct} \subseteq \Ker^+_{ct} \subseteq V_t$ by
\[
\Ker^-_{ct} = \bigcup_{\substack{r\in c^- \\ t\le r}} \Ker \rho_{rt},
\qquad
\Ker^+_{ct} = \bigcap_{\substack{r\in c^+}} \Ker \rho_{rt}.
\]
By convention $\Ima^-_{ct} = 0$ if $c^-$ is empty and $\Ker^+_{ct} = V_t$ if $c^+$ is empty.

\begin{lem}
\label{l:Jrealized}
Let $c$ be a cut.

(a) If $t\in c^+$, then $\Ima^+_{ct} = \Ima \rho_{ts}$ for some $s\in c^+$ with $s\le t$.

(b) If $t\in c^-$ and $c^+\neq\emptyset$, then $\Ker^+_{ct} = \Ker \rho_{rt}$ for some $r\in c^+$.
\end{lem}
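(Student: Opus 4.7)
The plan is to prove both parts by the same scheme: reduce the potentially uncountable intersection/union in the definition to a countable nested chain, then apply the descending chain condition. The ingredient that makes this possible is the standing assumption that $R$ contains a countable subset $D$ dense in the order topology.

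For part (a), I first dispose of the easy case. If $c^+$ has a minimum $s_0$, then $s_0\le t$ since $t\in c^+$, and for every $s\in c^+$ with $s\le t$ we have $s_0\le s$, so $\rho_{ts_0}=\rho_{ts}\rho_{s s_0}$ gives $\Ima\rho_{ts_0}\subseteq\Ima\rho_{ts}$; hence $\Ima^+_{ct}=\Ima\rho_{t s_0}$ and we are done.

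Otherwise, write $I=\{s\in c^+:s\le t\}$, which has no minimum. The key observation is that $D\cap I$ is coinitial in $I$: given $s\in I$, the absence of a minimum provides some $s'\in I$ with $s'<s$, and density of $D$ in the open interval $(s',s)$ yields $d\in D$ with $s'<d<s$; such $d$ lies in $c^+$ (since $d>s'\in c^+$ rules out $d\in c^-$) and satisfies $d<s\le t$. Enumerate $D\cap I=\{d_1,d_2,\dots\}$ and define a weakly decreasing sequence by $t_n=\min(d_1,\dots,d_n)$, which lies in $I$; since $I$ has no minimum, after discarding repeats we obtain an infinite strictly decreasing sequence $s_1>s_2>\cdots$ in $I$ that is still coinitial in $I$. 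The chain
\[
\Ima\rho_{ts_1}\supseteq\Ima\rho_{ts_2}\supseteq\cdots
\]
stabilizes by the descending chain condition on images, say at $\Ima\rho_{ts_N}$. I then claim $\Ima^+_{ct}=\Ima\rho_{ts_N}$: the inclusion $\subseteq$ is immediate because $s_N\in c^+$ with $s_N\le t$; for the reverse, given any $s\in I$, coinitiality produces $n\ge N$ with $s_n\le s$, and then $\Ima\rho_{ts_N}=\Ima\rho_{ts_n}\subseteq\Ima\rho_{ts}$.

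Part (b) is dual. If $c^+$ has a minimum $r_0$, then $t\le r_0$ and $\Ker\rho_{rt}\supseteq\Ker\rho_{r_0 t}$ for every $r\in c^+$ (factor $\rho_{rt}=\rho_{r r_0}\rho_{r_0 t}$), giving $\Ker^+_{ct}=\Ker\rho_{r_0 t}$. If $c^+$ has no minimum, extract by the same density-based argument a strictly decreasing sequence $r_1>r_2>\cdots$ in $c^+$ which is coinitial in $c^+$; note $t<r_n$ automatically, since $t\in c^-$. Then
\[
\Ker\rho_{r_1 t}\supseteq\Ker\rho_{r_2 t}\supseteq\cdots
\]
stabilizes by the descending chain condition on kernels at some $\Ker\rho_{r_N t}$, and coinitiality gives $\Ker^+_{ct}=\Ker\rho_{r_N t}$ exactly as before. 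The only subtle step in the whole argument is the extraction of the countable coinitial sequence from a dense subset of $R$; once that is in place, DCC finishes each part in a single line.
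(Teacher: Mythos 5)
Your argument is correct in outline but takes a genuinely different, and more roundabout, route than the paper. The paper proves this lemma by direct contradiction, with no use of the density assumption: if $\Ima^+_{ct}\neq\Ima\rho_{ts}$ for every admissible $s$, then starting from $s_1=t$ one can repeatedly pick $s_{n+1}\in c^+$ with $\Ima\rho_{ts_{n+1}}\subsetneq\Ima\rho_{ts_n}$ (this exists because the images $\Ima\rho_{ts}$ for $s\le t$ form a chain under inclusion, and the intersection $\Ima^+_{ct}$ is proper in $\Ima\rho_{ts_n}$), yielding $t=s_1>s_2>\cdots$ and a strictly descending chain of images, contradicting the DCC. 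That is shorter and keeps the density hypothesis confined to Lemma~\ref{l:coinit}, where it is genuinely needed. Your version essentially re-proves Lemma~\ref{l:coinit} inline and then applies the DCC to the resulting countable coinitial sequence; it works, but it does more than necessary.

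There is also a small gap in your coinitiality argument. You descend one step from $s$ to $s'<s$ in $I$ and then invoke density to find $d\in D$ with $s'<d<s$. But in a general total order the open interval $(s',s)$ may be empty, in which case density of $D$ gives nothing. You need to descend twice: take $s''<s'<s$ in $I$, so that $(s'',s)$ contains $s'$ and hence is non-empty; then $D$ meets $(s'',s)$, and any $d$ there lies in $I$ and satisfies $d<s$. This is exactly what the paper does in the proof of Lemma~\ref{l:coinit}. A minor further slip: coinitiality of $(s_n)$ gives some $m$ with $s_m\le s$, not necessarily $m\ge N$; but the conclusion $\Ima\rho_{ts_N}\subseteq\Ima\rho_{ts}$ still follows, either because $s_N\le s_m\le s$ when $m<N$, or because stabilisation gives $\Ima\rho_{ts_N}=\Ima\rho_{ts_m}\subseteq\Ima\rho_{ts}$ when $m\ge N$.
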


\begin{proof}
(a) Suppose that $\Ima^+_{ct} \neq \Ima \rho_{ts}$ for all $s\in c^+$ with $s\le t$.
Let $s_1=t$. Since $\Ima^+_{ct} \neq \Ima \rho_{ts_1}$, there is $s_2 \in c^+$ with $\Ima \rho_{ts_2}$ strictly contained in $\Ima \rho_{ts_1}$.
Similarly, since $\Ima^+_{ct} \neq \Ima \rho_{ts_2}$, there is $s_3 \in c^+$ with $\Ima \rho_{ts_3}$ strictly contained in $\Ima \rho_{ts_3}$,
and so on. Then the chain $\Ima \rho_{ts_1} \supset \Ima \rho_{ts_2} \supset \dots $ doesn't stabilize, which is a contradiction.
Part (b) is similar.
\end{proof}

\begin{lem}
\label{l:cutrho}
Let $c$ be a cut and $s\le t$.

(a) If $s,t\in c^+$, then $\rho_{ts}(\Ima^\pm_{cs}) = \Ima^\pm_{ct}$, and

(b) If $s,t\in c^-$, then $\rho_{ts}^{-1}(\Ker^\pm_{ct}) = \Ker^\pm_{cs}$, so
$\rho_{ts}(\Ker^\pm_{cs}) \subseteq \Ker^\pm_{ct}$.
\end{lem}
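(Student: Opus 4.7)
The plan is to treat the four subspace types ($\Ima^\pm$ and $\Ker^\pm$) separately, handling the "$-$" versions (directed unions) directly via composition of the $\rho$'s and reducing the "$+$" versions (intersections) to a single $\Ima\rho$ or $\Ker\rho$ via Lemma~\ref{l:Jrealized}. For (a), every $u\in c^-$ satisfies $u<s\le t$, so $\rho_{tu}=\rho_{ts}\rho_{su}$ gives $\Ima\rho_{tu}=\rho_{ts}(\Ima\rho_{su})$, and linearity of $\rho_{ts}$ lets it commute with the directed union over $u\in c^-$, proving $\rho_{ts}(\Ima^-_{cs})=\Ima^-_{ct}$. For $\Ima^+$, I would apply Lemma~\ref{l:Jrealized}(a) at both $s$ and $t$ to get realizers $v_0,w_0\in c^+$ with $v_0\le s$, $w_0\le t$, and set $u_0=\min(v_0,w_0)\in c^+$. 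Since $\Ima\rho_{su}$ decreases as $u$ decreases and is squeezed between $\Ima\rho_{sv_0}$ and the intersection $\Ima^+_{cs}$, one obtains $\Ima\rho_{su_0}=\Ima^+_{cs}$; the same squeeze at $t$ gives $\Ima\rho_{tu_0}=\Ima^+_{ct}$. The factorization $\rho_{tu_0}=\rho_{ts}\rho_{su_0}$ then closes the argument.

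Part (b) is dual. I would first prove the stronger identity $\rho_{ts}^{-1}(\Ker^\pm_{ct})=\Ker^\pm_{cs}$, from which $\rho_{ts}(\Ker^\pm_{cs})\subseteq\Ker^\pm_{ct}$ is automatic via $f(f^{-1}(Y))\subseteq Y$. For $\Ker^-$: whenever $r\in c^-$ with $r\ge t$, $\rho_{rs}=\rho_{rt}\rho_{ts}$ yields $\rho_{ts}^{-1}(\Ker\rho_{rt})=\Ker\rho_{rs}$, and preimages commute with unions; the remaining contributions to $\Ker^-_{cs}$ from $r\in c^-$ with $s\le r<t$ automatically lie in $\Ker\rho_{ts}\subseteq\rho_{ts}^{-1}(\Ker^-_{ct})$, since $\rho_{ts}(v)=\rho_{tr}\rho_{rs}(v)=0$. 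For $\Ker^+$, the case $c^+=\emptyset$ is trivial from the stated convention; otherwise Lemma~\ref{l:Jrealized}(b) provides realizers $r_s,r_t\in c^+$, and the dual common-realizer trick $r_0=\min(r_s,r_t)\in c^+$ (note $r_0>t$ since all of $c^+$ exceeds all of $c^-$) gives $\Ker\rho_{r_0s}=\Ker^+_{cs}$ and $\Ker\rho_{r_0t}=\Ker^+_{ct}$, whence $\rho_{r_0s}=\rho_{r_0t}\rho_{ts}$ yields the preimage identity.

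The technical crux I expect is the common-realizer construction in the "$+$" cases: one must check that replacing two independently chosen realizers by their minimum still realizes both images (resp.\ kernels) at the same index, which rests on the monotonicity of the relevant chain in $u$ (resp.\ $r$) combined with the DCC-provided stabilization.
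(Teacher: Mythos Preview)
Your proposal is correct and follows the same strategy as the paper—handle the directed unions directly and reduce the intersections via Lemma~\ref{l:Jrealized}. The paper's execution is a touch more economical: for $\Ima^+$ it invokes Lemma~\ref{l:Jrealized}(a) only once (at $s$), observing that $\rho_{ts}(\Ima\rho_{sr})=\Ima\rho_{tr}\supseteq\Ima^+_{ct}$ already gives the nontrivial inclusion, and for $\Ker^+$ no realizer is needed at all since preimages commute with arbitrary intersections.
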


\begin{proof}
It is clear that $\rho_{ts}(\Ima^+_{cs}) \subseteq \Ima^+_{ct}$.
Now $\Ima^+_{cs} = \Ima \rho_{sr}$ for some $r\in c^+$ with $r\le s$.
Then
\[
\rho_{ts}(\Ima^+_{cs}) = \rho_{ts}(\Ima \rho_{sr}) = \Ima \rho_{tr} \supseteq \Ima^+_{ct}.
\]
The rest is straightforward.
\end{proof}

\section{Intervals}
If $I$ is an interval, there are uniquely determined cuts $\ell$ and $u$ with $I=\ell^+ \cap u^-$.
Explicitly,
\begin{eqnarray*}
\ell^- = \{ t : \text{$t< s$ for all $s \in I$}\},
&\ell^+ = \{ t : \text{$t \ge s$ for some $s \in I$}\},
\\
u^+ = \{ t : \text{$t > s$ for all $s \in I$}\},
&u^- = \{ t : \text{$t\le s$ for some $s \in I$}\}.
\end{eqnarray*}
For $t\in I$ we define $V^-_{It} \subseteq V^+_{It} \subseteq V_t$ by
\begin{eqnarray*}
V^-_{It} &= &(\Ima^-_{\ell\, t} \cap \Ker^+_{ut}) + (\Ima^+_{\ell\, t} \cap \Ker^-_{ut}), \text{ and}\\
V^+_{It} &= &\Ima^+_{\ell\, t} \cap \Ker^+_{ut}.
\end{eqnarray*}

\begin{lem}
\label{l:Hprop}
For $s\le t$ in $I$ we have $\rho_{ts}(V^\pm_{Is}) = V^\pm_{It}$, and the map
\[
\overline\rho_{ts}:V^+_{Is}/V^-_{Is}\to V^+_{It}/V^-_{It}
\]
is an isomorphism.
\end{lem}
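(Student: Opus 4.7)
The plan is to apply Lemma \ref{l:cutrho} to each of the two cuts $\ell$ and $u$: since $s,t \in I = \ell^+ \cap u^-$, part (a) gives $\rho_{ts}(\Ima^\pm_{\ell s}) = \Ima^\pm_{\ell t}$, and part (b) gives $\rho_{ts}^{-1}(\Ker^\pm_{ut}) = \Ker^\pm_{us}$. These two facts combine very smoothly: whenever we want to lift an element $y \in \Ima^\pm_{\ell t} \cap \Ker^\epsilon_{ut}$ back to the corresponding intersection at $s$, we first pick any preimage $x \in \Ima^\pm_{\ell s}$ guaranteed by (a), and then use (b) to conclude automatically that $x \in \Ker^\epsilon_{us}$.

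First I would verify $\rho_{ts}(V^+_{Is}) = V^+_{It}$ directly by this lift-and-intersect argument. The equality $\rho_{ts}(V^-_{Is}) = V^-_{It}$ I would handle summand by summand, showing separately that $\rho_{ts}(\Ima^-_{\ell s} \cap \Ker^+_{us}) = \Ima^-_{\ell t} \cap \Ker^+_{ut}$ and $\rho_{ts}(\Ima^+_{\ell s} \cap \Ker^-_{us}) = \Ima^+_{\ell t} \cap \Ker^-_{ut}$ by exactly the same method.

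Surjectivity of $\overline\rho_{ts}$ is then immediate. The main obstacle, and the only step that requires a new idea, is injectivity. Suppose $x \in V^+_{Is}$ and $\rho_{ts}(x) \in V^-_{It}$; I would write $\rho_{ts}(x) = a + b$ with $a \in \Ima^-_{\ell t} \cap \Ker^+_{ut}$ and $b \in \Ima^+_{\ell t} \cap \Ker^-_{ut}$, and use the two surjectivity statements above to produce $a' \in \Ima^-_{\ell s} \cap \Ker^+_{us}$ and $b' \in \Ima^+_{\ell s} \cap \Ker^-_{us}$ with $\rho_{ts}(a') = a$, $\rho_{ts}(b') = b$. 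Both $a'$ and $b'$ lie in $V^-_{Is}$, and since $V^-_{Is} \subseteq V^+_{Is}$, the element $z := x - a' - b'$ lies in $V^+_{Is}$ and in $\Ker \rho_{ts}$.

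The crucial observation is that $t \in u^-$ and $s \le t$, so $\Ker \rho_{ts}$ is one of the subspaces in the union defining $\Ker^-_{us}$; hence $z \in \Ker^-_{us}$. Combined with $z \in \Ima^+_{\ell s}$, this places $z$ in $\Ima^+_{\ell s} \cap \Ker^-_{us} \subseteq V^-_{Is}$, and therefore $x = z + a' + b' \in V^-_{Is}$, giving injectivity. I anticipate that recognizing this role of the index $r = t$ in $\Ker^-_{us}$ is the only nonformal step; everything else is a careful bookkeeping of the two cuts and the four relations provided by Lemma \ref{l:cutrho}.
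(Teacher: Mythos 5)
Your proof is correct and follows essentially the same route as the paper: both rely on Lemma~\ref{l:cutrho} applied to the two cuts $\ell$ and $u$, and both establish injectivity of $\overline\rho_{ts}$ by lifting the decomposition of $\rho_{ts}(x)$ and landing in $\Ima^+_{\ell s}\cap\Ker^-_{us}\subseteq V^-_{Is}$. The only cosmetic difference is in the injectivity step: you lift both summands $a$ and $b$ and observe that $z:=x-a'-b'\in\Ker\rho_{ts}\subseteq\Ker^-_{us}$ (reading the inclusion directly off the definition with $r=t$), whereas the paper lifts only $h_1$ and then applies $\rho_{ts}^{-1}(\Ker^-_{ut})=\Ker^-_{us}$ to the element $g-g_1$, which maps to $h_2\in\Ker^-_{ut}$; both variants amount to the same use of Lemma~\ref{l:cutrho}(b).
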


\begin{proof}
This follows from Lemma~\ref{l:cutrho}. For example, if $h\in \Ima^+_{\ell\, t} \cap \Ker^-_{ut}$,
then $h = \rho_{ts}(g)$ for some $g\in \Ima^+_{\ell\, s}$. But then
\[
g\in \rho_{ts}^{-1}(h) \subseteq \rho_{ts}^{-1}(\Ker^-_{ut}) = \Ker^-_{us}
\]
so $g\in V^-_{It}$ and $h = \rho_{ts}(g) \in \rho_{ts}(V^-_{It})$.

The map $\overline\rho_{ts}$ is clearly surjective. To show it is injective, we show
that $V^+_{Is}\cap \rho_{ts}^{-1}(V^-_{It}) \subseteq V^-_{Is}$. Again this follows from Lemma~\ref{l:cutrho}.
Suppose that $g\in V^+_{Is}$ and that $h = \rho_{ts}(g)\in V^-_{It}$. Then $h = h_1+h_2$ with
$h_1 \in \Ima^-_{\ell\, t} \cap \Ker^+_{ut}$ and $h_2 \in \Ima^+_{\ell\, t} \cap \Ker^-_{ut}$.
Then $h_1 = \rho_{ts}(g_1)$ for some $g_1\in \Ima^-_{\ell\, s} \cap \Ker^+_{us}$.
Now $\rho_{ts}(g-g_1) = h_2\in \Ker^-_{ut}$, so $g-g_1\in \Ker^-_{us}$. Also $g-g_1\in \Ima^+_{\ell\, s}$, so $g\in V^-_{Is}$.
\end{proof}

\begin{lem}
\label{l:coinit}
Any interval $I$ contains a countable subset $S$ which is coinitial,
meaning that for all $t\in I$ there is $s\in S$ with $s\le t$.
\end{lem}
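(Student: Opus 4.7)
The plan is to split on whether the interval $I$ has a minimum element. If it does, say $m = \min I$, then $S = \{m\}$ is trivially a countable coinitial subset, so all the content is in the case where $I$ has no minimum.

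Assume $I$ has no minimum, and let $D \subseteq R$ be the countable subset which is dense in the order topology, provided by the paper's standing hypothesis on $R$. The candidate I would pick is $S := D \cap I$, which is automatically countable. To verify coinitiality, fix $t \in I$: since $I$ has no minimum, first pick $t_1 \in I$ with $t_1 < t$, and then $t_0 \in I$ with $t_0 < t_1$. The open interval $(t_0, t) \subseteq R$ contains $t_1$, so it is non-empty, and density of $D$ produces some $d \in D$ with $t_0 < d < t$. Since $t_0, t \in I$ and $I$ is an interval, $d \in I$, hence $d \in D \cap I = S$ with $d \le t$, as required.

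The small subtlety I would flag is that one genuinely needs two elements $t_0 < t_1 < t$ of $I$ strictly below $t$, not one. Density in the order topology only places points of $D$ inside open intervals $(a,b) \subseteq R$ that are non-empty, and a single witness $t_0 < t$ in $I$ leaves open the possibility that $(t_0,t) = \emptyset$ in $R$ (that is, $t$ is an immediate successor of $t_0$), in which case density yields nothing. It is precisely the failure of $I$ to have a minimum that lets us produce the second witness and pin down a point of $D$ strictly between $t_0$ and $t$, which the interval property then forces to lie in $I$.
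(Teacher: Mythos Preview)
Your proof is correct and follows essentially the same approach as the paper: the same case split on whether $I$ has a minimum, the same candidate $S = D \cap I$ in the no-minimum case, and the same trick of descending twice below $t$ to produce a non-empty open interval $(t_0,t)$ that density can populate. Your closing remark about why two witnesses below $t$ are genuinely needed is a nice clarification the paper leaves implicit.
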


\begin{proof}
If $I$ has a mimimum element $m$, then $\{m\}$ is coinitial, so suppose otherwise.
By our standing assumption, $R$ has a countable subset $X$ which is 
dense in the order topology on $R$, so 
in particular $X$ meets any non-empty set of the form 
$(u,t) = \{ r\in R : u<r<t \}$ for $u,t\in R$.
Now $S = I\cap X$ is coinitial in $I$, for if
$t\in I$, then since $I$ has no mimimum element there is $r\in I$ with $r<t$,
and there is $u\in I$ with $u<r$. Then the set $(u,t)$ is non-empty,
so it contains some point $s\in X$. Then $u<s<t$, so $s\in S$.
\end{proof}

\section{Inverse limits}
Let $I$ be an interval. For $s\le t$ in $I$, $\rho_{ts}$ induces maps $V^\pm_{Is}\to V^\pm_{It}$, so one can consider the inverse limit
\[
V^\pm_I = \varprojlim_{t\in I} V^\pm_{It}.
\]
Note that to fit with the conventions of \cite[Chap.\ 0, \S 13.1]{G}, one should use the opposite ordering on $I$.
Letting $\pi_t : V^+_I \to V^+_{It}$ denote the natural map, one can identify
\[
V^-_I = \bigcap_{t\in I} \pi_t^{-1}(V^-_{It}) \subseteq V^+_I.
\]

\begin{lem}
\label{l:pitiso}
For any $t\in I$, the induced map
\[
\overline\pi_t : V^+_I/V^-_I\to V^+_{It}/V^-_{It}
\]
is an isomorphism.
\end{lem}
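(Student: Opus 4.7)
The plan is to reduce the lemma to two statements: that $\pi_t\colon V^+_I\to V^+_{It}$ is surjective (which immediately gives surjectivity of $\overline\pi_t$), and that any $(v_s)\in V^+_I$ with $v_t\in V^-_{It}$ automatically lies in $V^-_I$ (injectivity). Both will rest on Lemma \ref{l:Hprop}: the fact that for $s\le t$ in $I$, $\rho_{ts}$ restricts to surjections $V^+_{Is}\to V^+_{It}$ and $V^-_{Is}\to V^-_{It}$, and induces an isomorphism on quotients.

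For surjectivity, I would invoke Lemma \ref{l:coinit} to choose a countable coinitial subset of $I$, and from it build a decreasing coinitial sequence $t=t_1\ge t_2\ge t_3\ge\cdots$ in $I$ (one that descends below every element of the chosen coinitial set). Given $x\in V^+_{It}$, set $v_1=x$ and then inductively choose $v_n\in V^+_{It_n}$ with $\rho_{t_{n-1},t_n}(v_n)=v_{n-1}$, using the surjectivity part of Lemma \ref{l:Hprop}. For an arbitrary $s\in I$, coinitiality yields some $t_n\le s$, and I would define $v_s:=\rho_{s,t_n}(v_n)$. Standard checks show this is independent of the choice of $n$, lies in $V^+_{Is}$ (again by Lemma \ref{l:Hprop}), and satisfies $\rho_{s',s}(v_s)=v_{s'}$ for $s\le s'$ in $I$. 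The resulting $(v_s)$ lies in $V^+_I$ and projects to $x$.

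For injectivity, suppose $(v_s)\in V^+_I$ with $v_t\in V^-_{It}$; I would split cases on whether $s\ge t$ or $s\le t$ in $I$. If $s\ge t$, the inverse-system relation gives $v_s=\rho_{st}(v_t)$, and Lemma \ref{l:Hprop} says $\rho_{st}(V^-_{It})=V^-_{Is}$, so $v_s\in V^-_{Is}$. If $s\le t$, we instead have $v_s\in V^+_{Is}$ with $\rho_{ts}(v_s)=v_t\in V^-_{It}$, and the injectivity of the isomorphism $\overline\rho_{ts}$ in Lemma \ref{l:Hprop} forces $v_s\in V^-_{Is}$. Either way $(v_s)\in V^-_I$, so $\overline\pi_t$ is injective.

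The main obstacle is the surjectivity construction, since one must produce compatible lifts across the entire, possibly uncountable, interval $I$. The countable-coinitial subset furnished by Lemma \ref{l:coinit} is the crucial input: it reduces the lifting problem to a countable induction, after which the coherence of the transition maps automatically extends the lift to all of $I$. Without the standing countable-density hypothesis on $R$, one would be driven into transfinite recursion with genuinely problematic limit stages.
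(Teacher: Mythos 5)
Your proof is correct, and it is essentially an unrolled, self-contained version of the argument the paper gets by citing Grothendieck's Mittag--Leffler result [Chap.\ 0, Prop.\ 13.2.2] from EGA~III. The paper notes that $\rho_{ts}(V^-_{Is}) = V^-_{It}$ gives the Mittag--Leffler condition for the system $V^-_{It}$, uses Lemma~\ref{l:coinit} to ensure the countable-index hypothesis of the cited proposition, concludes that $0\to V^-_I\to V^+_I\to\varprojlim V^+_{It}/V^-_{It}\to 0$ is exact, and then observes that because the $\overline\rho_{ts}$ are isomorphisms the limit on the right is isomorphic to any of its terms. Your version replaces the citation with the direct construction that actually proves the Mittag--Leffler surjectivity statement in this case: pick a countable coinitial non-increasing sequence $t=t_1\ge t_2\ge\cdots$ in $I$, lift step by step along the surjections $\rho_{t_{n-1},t_n}\colon V^+_{It_n}\to V^+_{It_{n-1}}$ of Lemma~\ref{l:Hprop}, and then propagate forward via $\rho_{s,t_n}$; your injectivity argument likewise repackages the equality $V^+_{Is}\cap\rho_{ts}^{-1}(V^-_{It})=V^-_{Is}$ already established in the proof of Lemma~\ref{l:Hprop}. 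What your route buys is self-containedness and transparency about exactly where the standing countability assumption on $R$ enters; what the paper's route buys is brevity. Neither route has a gap. One small point worth making explicit in a final write-up: when you define $v_s:=\rho_{s,t_n}(v_n)$ for an arbitrary $s\in I$, the independence of the choice of $n$ with $t_n\le s$ follows from $\rho_{s,t_m}=\rho_{s,t_n}\rho_{t_n,t_m}$ for $t_m\le t_n\le s$ together with the compatibility $\rho_{t_n,t_m}(v_m)=v_n$ built into the inductive lift; you say ``standard checks,'' which is fine, but this is the one place a reader might want the detail spelled out.
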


\begin{proof}
If $s\le t$, then $\rho_{ts}(V^-_{Is}) = V^-_{It}$ by Lemma~\ref{l:Hprop}.
It follows that the system of vector spaces $V^-_{It}$ for $t\in I$, with transition maps $\rho_{ts}$,
satisfies the Mittag-Leffler condition \cite[Chap.\ 0, (13.1.2)]{G}.
Now, thanks to Lemma~\ref{l:coinit}, the hypotheses of \cite[Chap.\ 0, Prop.\ 13.2.2]{G} hold
for the system of exact sequences
\[
0\to V^-_{It}\to V^+_{It}\to V^+_{It}/V^-_{It} \to 0,
\]
and hence the sequence
\[
0 \to V^-_I \to V^+_I \to \varprojlim_{t\in I} V^+_{It}/V^-_{It} \to 0
\]
is exact.
By Lemma~\ref{l:Hprop}, the maps $\overline\rho_{ts}$ are all isomorphisms,
so the inverse limit on the right hand side is isomorphic to $V^+_{It}/V^-_{It}$
for all $t\in I$, giving the result.
\end{proof}

\section{Submodules}
For each interval $I$, choose a vector space complement $W^0_I$ to $V^-_I$ in $V^+_I$.
For $t\in I$, the restriction of $\pi_t$ to $W^0_I$ is injective by Lemma~\ref{l:pitiso}.

\begin{lem}
The assignment
\[
W_{It} = \begin{cases} \pi_t(W^0_I) & (t\in I) \\ 0 & (t\notin I) \end{cases}
\]
defines a submodule $W_I$ of the persistence module $V$.
\end{lem}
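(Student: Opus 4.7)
The plan is to check two things: that $W_I$ is actually a collection of subspaces (immediate from the setup, since $W^0_I \subseteq V^+_I$ maps under $\pi_t$ into $V^+_{It} \subseteq V_t$), and that the transition maps $\rho_{ts}$ respect the assignment, i.e. $\rho_{ts}(W_{Is}) \subseteq W_{It}$ whenever $s \le t$. I would handle the second requirement by a case analysis on whether $s$ and $t$ lie in $I$.

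The easy cases are when $s \notin I$, for then $W_{Is} = 0$ and there is nothing to check, and when both $s, t \in I$, where the defining property of the inverse limit $V^+_I$ gives $\rho_{ts} \circ \pi_s = \pi_t$, so $\rho_{ts}(W_{Is}) = \rho_{ts}(\pi_s(W^0_I)) = \pi_t(W^0_I) = W_{It}$. This in fact yields equality, not merely inclusion.

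The only case that requires any thought is $s \in I$ and $t \notin I$ with $s \le t$. Here I would first use the interval property of $I$ to observe that $t$ must lie in $u^+$: if instead $t \le r$ for some $r \in I$, then $s \le t \le r$ with $s, r \in I$ would force $t \in I$. So $t > s'$ for all $s' \in I$, i.e.\ $t \in u^+$. Then for any $w \in W^0_I \subseteq V^+_I$ we have $\pi_s(w) \in V^+_{Is} \subseteq \Ker^+_{us} = \bigcap_{r \in u^+} \Ker \rho_{rs}$, so in particular $\rho_{ts}(\pi_s(w)) = 0 \in W_{It}$.

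There is no real obstacle; the one point worth flagging is the interval argument that places $t$ in $u^+$, after which membership of $\pi_s(w)$ in $\Ker^+_{us}$ — which is built into the definition of $V^+_{Is}$ — closes the case immediately. The compatibility in the other two nontrivial configurations ($s, t \in I$ and $s \notin I$) is essentially bookkeeping about the inverse limit and the vanishing of $W_{Is}$.
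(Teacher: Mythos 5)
Your proof is correct and follows essentially the same route as the paper's: the case $s,t \in I$ uses $\rho_{ts}\pi_s = \pi_t$, and the case $s\in I$, $t\notin I$ uses the interval property to place $t$ in $u^+$ and then the inclusion $V^+_{Is} \subseteq \Ker^+_{us} \subseteq \Ker\rho_{ts}$. You simply spell out in a bit more detail the verification that $t\in u^+$, which the paper leaves implicit.
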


\begin{proof}
If $s\le t$ in $I$, then $\rho_{ts}\pi_s = \pi_t$, so $\rho_{ts}(W_{Is}) = W_{It}$.
Also, if $s\le t$ with $s\in I$ and $t\notin I$ then $t\in u^+$,
so $W_{Is} \subseteq V^+_{Is} \subseteq \Ker^+_{us} \subseteq \Ker\rho_{ts}$.
\end{proof}

\begin{lem}
\label{l:Hcompl}
$V^+_{It} = W_{It}\oplus V^-_{It}$ for all $t\in I$.
\end{lem}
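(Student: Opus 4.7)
The plan is to deduce this splitting directly from Lemma~\ref{l:pitiso} together with the definition $V^+_I = W^0_I \oplus V^-_I$. Since $\pi_t$ sends $V^-_I$ into $V^-_{It}$ (by the identification $V^-_I = \bigcap_s \pi_s^{-1}(V^-_{Is})$), the map $\pi_t$ descends to a map $V^+_I/V^-_I \to V^+_{It}/V^-_{It}$, which is exactly the isomorphism $\overline{\pi}_t$ of Lemma~\ref{l:pitiso}. Composing with the isomorphism $W^0_I \xrightarrow{\sim} V^+_I/V^-_I$ induced by the chosen complement, the composite
\[
W^0_I \hookrightarrow V^+_I \xrightarrow{\pi_t} V^+_{It} \twoheadrightarrow V^+_{It}/V^-_{It}
\]
is an isomorphism.

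From this single observation both halves of the claim fall out immediately. For the intersection $W_{It} \cap V^-_{It}$: any element has the form $\pi_t(w)$ with $w\in W^0_I$ and maps to $0$ in $V^+_{It}/V^-_{It}$, so by injectivity of the composite $w=0$, and hence the element is $0$. For the sum $W_{It}+V^-_{It}$: given $v\in V^+_{It}$, surjectivity of the composite provides $w\in W^0_I$ whose image in $V^+_{It}/V^-_{It}$ equals that of $v$, so $v-\pi_t(w)\in V^-_{It}$ and $v\in W_{It}+V^-_{It}$.

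There is essentially no obstacle here: Lemma~\ref{l:pitiso} has already done all the substantive work by arranging for $\overline{\pi}_t$ to be an isomorphism, and the present lemma is the routine translation of that isomorphism, together with the splitting $V^+_I=W^0_I\oplus V^-_I$, into a splitting at the level of the fibre $V^+_{It}$. The only point to verify carefully is that $\pi_t(V^-_I)\subseteq V^-_{It}$, which is built into the description of $V^-_I$ preceding Lemma~\ref{l:pitiso}.
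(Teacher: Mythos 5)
Your argument is correct and is exactly what the paper has in mind when it writes ``This follows from Lemma~\ref{l:pitiso}''; you have simply spelled out the routine verification that the isomorphism $\overline{\pi}_t$, together with the splitting $V^+_I = W^0_I \oplus V^-_I$, yields the claimed direct sum in the fibre.
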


\begin{proof}
This follows from Lemma~\ref{l:pitiso}.
\end{proof}

\begin{lem}
\label{l:kIcopies}
$W_I$ is isomorphic to a direct sum of copies of the interval module $k_I$.
\end{lem}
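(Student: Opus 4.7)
The plan is to show that within $I$, the persistence module $W_I$ is essentially ``constant'' with fibre $W^0_I$, so a choice of basis of $W^0_I$ produces a decomposition into copies of $k_I$.

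First I would observe that for each $t\in I$, the map $\pi_t$ restricted to $W^0_I$ gives a linear bijection $W^0_I \to W_{It}$: it is surjective by the definition $W_{It} = \pi_t(W^0_I)$, and injective because $W^0_I \cap V^-_I = 0$ by construction and $\pi_t|_{W^0_I}$ is already known to be injective via Lemma~\ref{l:pitiso}. Next, for $s\le t$ in $I$ the relation $\rho_{ts}\pi_s = \pi_t$ identifies $\rho_{ts}|_{W_{Is}}$ with the composition $\pi_t \circ (\pi_s|_{W^0_I})^{-1}$, which is an isomorphism $W_{Is}\to W_{It}$.

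Now I would fix a $k$-basis $\{e_\alpha\}_{\alpha\in A}$ of $W^0_I$ and, for each $\alpha$, define a sub-persistence-module $W_{I,\alpha}\subseteq W_I$ by
\[
(W_{I,\alpha})_t = k\cdot \pi_t(e_\alpha) \quad (t\in I), \qquad (W_{I,\alpha})_t = 0 \quad (t\notin I).
\]
Compatibility with the structure maps is immediate from $\rho_{ts}\pi_s(e_\alpha) = \pi_t(e_\alpha)$ for $s\le t$ in $I$, together with the vanishing of $\rho_{ts}$ on $W_{Is}$ when $s\in I$ and $t\notin I$ (already used in the proof that $W_I$ is a submodule of $V$). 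Since every $\pi_t(e_\alpha)$ is non-zero and the transition maps send it to the corresponding basis vector at later indices, $W_{I,\alpha}$ is isomorphic to $k_I$.

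Finally, I would verify $W_I = \bigoplus_{\alpha\in A} W_{I,\alpha}$ pointwise: for $t\in I$, the bijection $W^0_I\to W_{It}$ sends the basis $\{e_\alpha\}$ to $\{\pi_t(e_\alpha)\}$, which is therefore a basis of $W_{It}$, giving $W_{It} = \bigoplus_\alpha k\cdot \pi_t(e_\alpha)$; outside $I$ everything is zero. I do not expect a serious obstacle here, since all the hard work is already contained in Lemma~\ref{l:pitiso}; the only thing to watch is that the submodule decomposition is done uniformly via a single basis of $W^0_I$ rather than independently at each $t$, which is exactly what the inverse-limit description provides.
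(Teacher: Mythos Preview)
Your proof is correct and follows essentially the same approach as the paper: choose a basis of $W^0_I$, push each basis vector forward via the $\pi_t$ to obtain a submodule isomorphic to $k_I$, and verify the direct sum decomposition pointwise using the fact that $\pi_t|_{W^0_I}$ is a bijection onto $W_{It}$. The paper's argument is just a more condensed version of what you wrote.
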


\begin{proof}
Choose a basis $B$ of $W^0_I$.
For $b\in B$, the elements $b_t = \pi_t(b)$ are non-zero
and satisfy $\rho_{ts}(b_s)=b_t$ for $s\le t$,
so they span a submodule $S(b)$ of $W_I$ which is isomorphic to $k_I$.
Now $\{ b_t : b\in B \}$ is a basis of $W_{It}$, for all $t$, so $W_I = \bigoplus_{b\in B} S(b)$.
\end{proof}

\section{Sections}
It remains to prove that $V$ is the direct sum of the submodules $W_I$ as $I$ runs
through all intervals. We prove this using what we call in this paper a `section'. We work first in an arbitrary vector space.

By a \emph{section} of a vector space $U$ we mean a pair $(F^-,F^+)$ of
subspaces $F^- \subseteq F^+ \subseteq U$.
We say that a set $\{ (F^-_\lambda,F^+_\lambda) : \lambda\in \Lambda \}$ of sections of $U$
\emph{is disjoint} if for all $\lambda\neq \mu$,
either $F^+_\lambda \subseteq F^-_\mu$ or $F^+_\mu \subseteq F^-_\lambda$;
that it \emph{covers $U$} provided that for all subspaces $X\subseteq U$ with $X\neq U$
there is some $\lambda$ with
\[
X + F^-_\lambda \neq X + F^+_\lambda;
\]
and that it \emph{strongly covers $U$} provided that for all
subspaces $Y,Z\subseteq U$ with $Z \not\subseteq Y$
there is some $\lambda$ with
\[
Y + (F^-_\lambda \cap Z) \neq Y + (F^+_\lambda \cap Z).
\]

\begin{lem}
\label{l:dirsumcond}
Suppose that $\{(F^-_\lambda,F^+_\lambda) : \lambda\in \Lambda \}$ is a set of
sections which is disjoint and covers $U$.
For each $\lambda\in \Lambda$, let $W_\lambda$ be a subspace with $F^+_\lambda = W_\lambda \oplus F^-_\lambda$.
Then $U = \bigoplus_{\lambda\in \Lambda} W_\lambda$.
\end{lem}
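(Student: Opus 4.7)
The plan is to prove the two halves of a direct-sum decomposition separately: first that the sum $\sum_\lambda W_\lambda$ is direct, then that it equals $U$. The spanning part is essentially immediate from the covering hypothesis, so the real work is in the directness argument, where I need to exploit the disjointness condition.

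For directness, I would first observe that disjointness induces a total order on any finite subset of $\Lambda$ (ignoring indices with $W_\lambda = 0$). Define $\lambda \prec \mu$ to mean $F^+_\lambda \subseteq F^-_\mu$. Transitivity is immediate since $F^+_\lambda \subseteq F^-_\mu \subseteq F^+_\mu \subseteq F^-_\nu$. For antisymmetry, if both $\lambda \prec \mu$ and $\mu \prec \lambda$ held, then chaining the inclusions forces $F^-_\lambda = F^+_\lambda$, i.e.\ $W_\lambda = 0$, which we may discard. Disjointness then says that $\prec$ is a strict total order on the set of indices with $W_\lambda \neq 0$. Now suppose that $w_{\lambda_1} + \dots + w_{\lambda_n} = 0$ is a non-trivial relation with each $w_{\lambda_i} \in W_{\lambda_i}\setminus\{0\}$ and the $\lambda_i$ distinct; reindex so that $\lambda_n$ is maximal for $\prec$. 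Then $w_{\lambda_i} \in F^+_{\lambda_i} \subseteq F^-_{\lambda_n}$ for $i < n$, so $w_{\lambda_n} = -\sum_{i<n} w_{\lambda_i}$ lies in $F^-_{\lambda_n} \cap W_{\lambda_n} = 0$, a contradiction.

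For the spanning statement, set $X = \sum_{\lambda\in\Lambda} W_\lambda$ and suppose for contradiction that $X \neq U$. By the covering hypothesis, there is some $\lambda$ with $X + F^-_\lambda \neq X + F^+_\lambda$. But $F^+_\lambda = W_\lambda + F^-_\lambda$ and $W_\lambda \subseteq X$, so $X + F^+_\lambda = X + W_\lambda + F^-_\lambda = X + F^-_\lambda$, yielding the desired contradiction. Combining the two parts gives $U = \bigoplus_{\lambda \in \Lambda} W_\lambda$.

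The main obstacle—if one can call it that—is spotting the total-order trick that converts the disjointness condition (which is a priori only a pairwise condition) into a useful maximum element for finite linear combinations. Once that is in hand, both halves of the argument are short, and no infinite or limit process is needed: the directness is tested one finite relation at a time, and the spanning reduces to plugging the definition of $W_\lambda$ into the covering hypothesis.
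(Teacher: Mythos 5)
Your proof is correct and follows essentially the same approach as the paper: pick a $\prec$-maximal index in any finite relation to establish directness, and use the covering hypothesis directly for spanning. The only difference is that you explicitly verify that disjointness induces a total order on the indices with $W_\lambda\neq 0$ before selecting the maximum, whereas the paper simply asserts that one may reindex so that $F^+_{\lambda_i}\subseteq F^-_{\lambda_1}$ for $i>1$; your version fills in that small justification.
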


\begin{proof}
Given a relation $c_{\lambda_1} + \dots + c_{\lambda_n} = 0$,
by disjointness we may assume that
$F^+_{\lambda_i} \subseteq F^-_{\lambda_1}$ for all $i>1$. But then
$c_{\lambda_1} = - \sum_{i>1} c_{\lambda_i} \in F^-_{\lambda_1}$, so it is zero.
Thus the sum is direct.

Now let $X = \bigoplus_{\lambda\in \Lambda} W_{\lambda}$ and suppose that $X \neq U$.
By the covering property, there is $\lambda$ with
$X + F^-_\lambda \neq X + F^+_\lambda$.
But then $X + F^+_\lambda = X + W_\lambda + F^-_\lambda \subseteq X + F^-_\lambda$, since $W_\lambda \subseteq X$,
a contradiction.
\end{proof}

\begin{lem}
\label{l:refine}
If $\{ (F^-_\lambda,F^+_\lambda) : \lambda\in \Lambda\}$ is a sets of section which is disjoint and covers $U$,
and $\{ (G^-_\sigma,G^+_\sigma) : \sigma\in \Sigma\}$ is a set of sections which is disjoint and strongly covers $U$,
then the set
\[
\{ (F^-_\lambda + G^-_\sigma\cap F^+_\lambda,F^-_\lambda + G^+_\sigma\cap F^+_\lambda) : (\lambda,\sigma)\in \Lambda \times \Sigma \}.
\]
is disjoint and covers $U$.
\end{lem}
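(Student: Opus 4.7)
The plan is to verify disjointness and covering separately for the refined collection, writing \(H^{\pm}_{\lambda,\sigma} = F^-_\lambda + G^{\pm}_\sigma \cap F^+_\lambda\).

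For disjointness, I would take two distinct pairs \((\lambda,\sigma)\) and \((\lambda',\sigma')\) and split into cases. If \(\lambda \neq \lambda'\), disjointness of the \(F\)-sections lets me assume \(F^+_\lambda \subseteq F^-_{\lambda'}\); since \(H^+_{\lambda,\sigma} \subseteq F^+_\lambda\) and \(F^-_{\lambda'} \subseteq H^-_{\lambda',\sigma'}\), this immediately gives \(H^+_{\lambda,\sigma} \subseteq H^-_{\lambda',\sigma'}\). If \(\lambda = \lambda'\) and \(\sigma \neq \sigma'\), disjointness of the \(G\)-sections lets me assume \(G^+_\sigma \subseteq G^-_{\sigma'}\), and intersecting with \(F^+_\lambda\) and adding \(F^-_\lambda\) gives \(H^+_{\lambda,\sigma} \subseteq H^-_{\lambda,\sigma'}\). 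This is routine.

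The real content is the covering property. Given a proper subspace \(X \subsetneq U\), the covering property of the \(F\)-sections produces \(\lambda\) with \(X + F^-_\lambda \neq X + F^+_\lambda\). The key observation is that this inequality can be rewritten as \(F^+_\lambda \not\subseteq X + F^-_\lambda\), which is exactly a hypothesis of strong covering applied with \(Y := X + F^-_\lambda\) and \(Z := F^+_\lambda\). Strong covering of the \(G\)-sections then yields \(\sigma\) with
\[
Y + G^-_\sigma \cap Z \neq Y + G^+_\sigma \cap Z,
\]
and unwinding the definitions of \(Y\), \(Z\) this reads precisely as \(X + H^-_{\lambda,\sigma} \neq X + H^+_{\lambda,\sigma}\), as required.

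I expect the main (modest) obstacle to be psychological rather than technical: one must notice that the strong covering hypothesis on the second family is tailored exactly so that one can cut with the \(F^+_\lambda\) coming from applying ordinary covering to the first family, and that the asymmetry between ``covers'' and ``strongly covers'' in the hypothesis is essential---ordinary covering of the \(G\)-sections would only yield a \(\sigma\) separating \(X + G^-_\sigma\) from \(X + G^+_\sigma\), which is not enough to produce a separation inside \(F^+_\lambda\). Once this is spotted, both parts of the lemma are short formal manipulations with no choice of parameters or limiting arguments needed.
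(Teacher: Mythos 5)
Your proposal is correct and takes essentially the same approach as the paper: applying ordinary covering for the $F$-sections to find $\lambda$, then invoking strong covering of the $G$-sections with $Y = X + F^-_\lambda$ and $Z = F^+_\lambda$. The only cosmetic difference is that you fill in the disjointness argument, which the paper dismisses as straightforward.
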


\begin{proof}
Disjointness is straightforward.
Suppose given $X\neq U$.
Since the $(F^-_\lambda,F^+_\lambda)$ cover $U$, there is $\lambda$ with
\[
X + F^-_\lambda \neq X + F^+_\lambda.
\]
Now letting $Y = X+F^-_\lambda$ and $Z = F^+_\lambda$, we have $Z\not\subseteq Y$.
Thus since the $(G^-_\sigma,G^+_\sigma)$ strongly cover $U$, there is $\sigma$ with
\[
Y + (G^-_\sigma\cap Z) \neq Y + (G^+_\sigma\cap Z).
\]
Hence
\[
X + F^-_\lambda + (G^-_\sigma\cap F^+_\lambda) \neq X + F^-_\lambda + (G^+_\sigma\cap F^+_\lambda).
\]
\end{proof}

\section{Completion of the proof}
Recall that $V$ is a persistence module with the 
descending chain condition on images and kernels.

\begin{lem}
\label{l:jksections}
For $t\in R$, each of the sets

(a) $\{(\Ima^-_{ct},\Ima^+_{ct}):\text{$c$ a cut with $t\in c^+$}\}$, and

(b) $\{(\Ker^-_{ct},\Ker^+_{ct}):\text{$c$ a cut with $t\in c^-$}\}$

\noindent
is disjoint and strongly covers $V_t$.
\end{lem}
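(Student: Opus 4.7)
The disjointness assertions are direct from the fact that cuts are totally ordered (since each $c^-$ is a downward-closed initial segment of $R$). Given two distinct cuts $c<c'$, pick $s\in c'^-\cap c^+$. In case (a), where $t\in c^+\cap c'^+$, one has $s<t$, whence $\Ima^+_{ct}\subseteq \Ima\rho_{ts}\subseteq \Ima^-_{c't}$. In case (b), where $t\in c^-\cap c'^-$, one has $t<s$, and similarly $\Ker^+_{ct}\subseteq \Ker\rho_{st}\subseteq \Ker^-_{c't}$.

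The heart of the proof is strong covering; I describe the approach for (a), as (b) is dual. Given $Y,Z\subseteq V_t$ with $Z\not\subseteq Y$, the plan is to exhibit a single cut $c$ with $t\in c^+$, $\Ima^+_{ct}\cap Z\not\subseteq Y$, and $\Ima^-_{ct}\cap Z\subseteq Y$; such a $c$ witnesses strong covering immediately. To produce it, I consider the collection $\mathcal{D}$ of all cuts $c$ with $t\in c^+$ and $\Ima^+_{ct}\cap Z\not\subseteq Y$. This set is nonempty, since the cut with $c^+=\{r\in R:r\ge t\}$ gives $\Ima^+_{ct}=V_t$. Because cuts are totally ordered, their infimum $\hat{c}$, defined by $\hat{c}^+=\bigcup_{c\in\mathcal{D}}c^+$, is a well-defined cut and still has $t\in\hat{c}^+$.

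Two verifications remain. First, that $\hat{c}$ itself lies in $\mathcal{D}$: by Lemma~\ref{l:Jrealized}(a), $\Ima^+_{\hat{c}\,t}$ is realised as $\Ima\rho_{ts_0}$ for some $s_0\in \hat{c}^+$ with $s_0\le t$; this $s_0$ lies in some $c_0^+$ with $c_0\in\mathcal{D}$, and the inclusion $\Ima^+_{c_0 t}\subseteq \Ima\rho_{ts_0}=\Ima^+_{\hat{c}\,t}$ shows the latter also meets $Z$ outside $Y$. Second, that $\Ima^-_{\hat{c}\,t}\cap Z\subseteq Y$: I argue by contradiction. A putative element $z\in(\Ima^-_{\hat{c}\,t}\cap Z)\setminus Y$ lies in $\Ima\rho_{ts}$ for some $s\in \hat{c}^-$; the cut $c_z$ whose positive part is the upward closure in $R$ of $\{s'\le t:z\in\Ima\rho_{ts'}\}$ then has $z\in\Ima^+_{c_z t}$, so $c_z\in\mathcal{D}$, yet $s\in c_z^+\setminus \hat{c}^+$ forces $c_z<\hat{c}$, contradicting the minimality of $\hat{c}$.

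The main obstacle is the first verification, that the infimum is attained in $\mathcal{D}$: this is precisely where Lemma~\ref{l:Jrealized}, and hence the descending chain condition, enters essentially. Case (b) proceeds by the symmetric argument, using Lemma~\ref{l:Jrealized}(b); the only subtlety is the boundary case where the analogous infimum $\hat{c}$ has $\hat{c}^+=\emptyset$, which is handled by the convention $\Ker^+_{\hat{c}\,t}=V_t$, making membership of $\hat{c}$ in the relevant collection automatic.
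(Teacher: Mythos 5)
Your argument is correct, and the cut you construct is in fact literally the one the paper writes down directly: unwinding your definition $\hat{c}^+ = \bigcup_{c\in\mathcal{D}} c^+$, a point $s\le t$ lies in $\hat{c}^+$ if and only if $\Ima\rho_{ts}\cap Z\not\subseteq Y$ (your own second verification is essentially the ``only if'' direction of this, and the cut with upward closure of $\{s\}$ as positive part gives the ``if''). The paper simply sets $c^- = \{s : \Ima\rho_{ts}\cap Z\subseteq Y\}$, $c^+ = \{s:\Ima\rho_{ts}\cap Z\not\subseteq Y\}$ from the start, observes this is a cut because $\Ima\rho_{ts}$ is monotone in $s$, gets $\Ima^-_{ct}\cap Z\subseteq Y$ directly from the directedness of the union, and applies Lemma~\ref{l:Jrealized} for $\Ima^+_{ct}$ exactly as you do. So the two proofs rely on the same cut, the same appeal to Lemma~\ref{l:Jrealized}, and the same role for the descending chain condition; your infimum-of-a-family-of-cuts framing is just a more roundabout way to arrive at the explicit formula, at the cost of two extra verifications that the paper's direct definition makes automatic.
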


\begin{proof}
We prove (a); part (b) is similar.
If $c$ and $d$ are distinct cuts with $c^+$ and $d^+$ containing $t$,
then exchanging $c$ and $d$ if necessary, we may assume that $c^+\cap d^- \neq \emptyset$.
Now if $s\in c^+\cap d^-$, then $s< t$ and
\[
\Ima^+_{ct} \subseteq \Ima \rho_{ts} \subseteq \Ima^-_{dt},
\]
giving disjointness. Now suppose that $Y,Z\subseteq U$ with $Z \not\subseteq Y$.
Defining
\[
c^- = \{ s\in R : \Ima \rho_{ts} \cap Z \subseteq Y \},
\quad
c^+ = \{ s\in R : \Ima \rho_{ts} \cap Z \not\subseteq Y \},
\]
clearly $c$ is a cut, $t\in c^+$, and
\[
Y + (\Ima^-_{ct} \cap Z) = Y + (\bigcup_{\substack{s\in c^-}} \Ima \rho_{ts} \cap Z) = \bigcup_{\substack{s\in c^-}} (Y + (\Ima \rho_{ts} \cap Z)) = Y.
\]
By Lemma~\ref{l:Jrealized}, we have $\Ima^+_{ct} = \Ima \rho_{ts}$ for
some $s\in c^+$ with $s\le t$, so
\[
Y + (\Ima^+_{ct} \cap Z) = Y + (\Ima \rho_{ts} \cap Z) \neq Y,
\]
giving the strong covering property.
\end{proof}

\begin{proof}[Proof of Theorem~\ref{t:dcc}]
For $I$ an interval and $t\in I$, we consider the section $(F^-_{It},F^+_{It})$ of $V_t$ given by
\[
F^\pm_{It} = \Ima^-_{\ell\, t} + \Ker^\pm_{ut} \cap  \Ima^+_{\ell\, t},
\]
where $\ell$ and $u$ are the cuts determined by $I$.
As $I$ runs through all intervals containing $t$, the cuts $\ell$ and $u$ run through all cuts with $t\in\ell^+$ and $t\in u^-$.
Thus by Lemmas~\ref{l:refine} and \ref{l:jksections},
the set of sections $(F^-_{It},F^+_{It})$ is disjoint and covers $V_t$.

Now by Lemma~\ref{l:Hcompl} we have $V^+_{It} = W_{It}\oplus V^-_{It}$ for all $t\in I$.
It follows that $F^+_{It} = W_{It}\oplus F^-_{It}$.
Thus by Lemma~\ref{l:dirsumcond}, the space $V_t$ is the direct sum of the spaces $W_{It}$ as $I$ runs
through all intervals containing $t$.
Thus $V$ is the direct sum of the submodules $W_I$, and each of these is a direct sum of copies of $k_I$
by Lemma~\ref{l:kIcopies}.
\end{proof}

We now turn to the results mentioned at the end of the introduction.

\begin{cor}
\label{c:mult}
The multiplicity of $k_I$ as a summand of $V$ is equal to the dimension of $V^+_I/V^-_I$.
\end{cor}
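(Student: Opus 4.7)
The plan is to simply read off the multiplicity from the particular decomposition constructed in the proof of Theorem~\ref{t:dcc}, and then invoke Krull--Remak--Schmidt--Azumaya to see that this is independent of the decomposition chosen.

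First I would recall that the interval module $k_I$ has endomorphism ring $k$, which is local, so by the Krull--Remak--Schmidt--Azumaya theorem (as noted in the introduction) the multiplicity of $k_I$ as a summand of any direct-sum decomposition of $V$ into indecomposables is a well-defined invariant of $V$. Hence it suffices to compute this multiplicity in one particular decomposition.

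Next I would use the decomposition produced in the proof of Theorem~\ref{t:dcc}: $V = \bigoplus_I W_I$, where $I$ ranges over intervals. By Lemma~\ref{l:kIcopies}, $W_I \cong \bigoplus_{b\in B} S(b)$ with each $S(b) \cong k_I$ and $B$ a basis of the chosen complement $W^0_I$ of $V^-_I$ in $V^+_I$. Thus the number of copies of $k_I$ appearing in $W_I$ is $|B| = \dim W^0_I = \dim V^+_I / V^-_I$. Since $W_J$ for $J\neq I$ is a direct sum of copies of $k_J \not\cong k_I$, these $W_J$ contribute no further copies of $k_I$, and the total multiplicity of $k_I$ in $V$ is $\dim V^+_I/V^-_I$, as claimed.

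I do not expect any real obstacle: the corollary is essentially a bookkeeping consequence of the construction already carried out in the proof of Theorem~\ref{t:dcc}, together with uniqueness of multiplicities from Krull--Remak--Schmidt--Azumaya. The only mild subtlety is that $\dim V^+_I/V^-_I$ may be infinite, but Azumaya's version of the theorem handles arbitrary cardinalities, so the statement remains correct without modification.
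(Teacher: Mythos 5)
Your proposal is correct and follows essentially the same route as the paper: read off the multiplicity from the explicit decomposition $V=\bigoplus_I W_I$ via Lemma~\ref{l:kIcopies}, with Krull--Remak--Schmidt--Azumaya guaranteeing that the count is independent of the decomposition. You are merely a bit more explicit than the paper about invoking Azumaya and about $k_J\not\cong k_I$ for $J\neq I$, but the substance is identical.
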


\begin{proof}
It is the multiplicity of $k_I$ as a summand of $W_I$, which by Lemma~\ref{l:kIcopies} is
the dimension of $W_I^0$, so of $V_I^+/V_I^-$.
\end{proof}

Now let $V$ be an arbitrary persistence module.

\begin{cor}
\label{c:sigma}
The following statements are equivalent.

(i) $V$ satisfies the descending chain condition on images and kernels.

(ii) $V$ is $\Sigma$-pure-injective.
\end{cor}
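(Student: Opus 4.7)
The plan is to apply the standard characterization (cf.\ \cite{CBlfp}) that in a locally finitely presented abelian category a module is $\Sigma$-pure-injective if and only if it satisfies the descending chain condition on pp-definable subgroups. The direction (ii)$\Rightarrow$(i) is then immediate, because for $s\le t$ the subspace $\Ima\rho_{ts}\subseteq V_t$ is pp-definable by $\exists y,\,x=\rho_{ts}(y)$ and for $r\ge t$ the subspace $\Ker\rho_{rt}\subseteq V_t$ is pp-definable by $\rho_{rt}(x)=0$, so DCC on all pp-subgroups specialises to DCC on images and kernels.

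For (i)$\Rightarrow$(ii), I would first observe that DCC on images and kernels is inherited by every direct sum $V^{(\kappa)}$, because the stabilisation index of a chain such as $\Ima\rho_{ts_i}$ depends only on $V$ and the chosen parameters, not on which summand one works in. Hence Theorem~\ref{t:dcc} applies to $V^{(\kappa)}$ and yields a decomposition $V^{(\kappa)}=\bigoplus_I k_I^{(B_I)}$ as a direct sum of interval modules. To extract pure-injectivity from this decomposition, I would use that pp-subgroups distribute over direct sums and that each $k_I$ is one-dimensional on $I$: consequently every pp-subgroup of $V_t$ is the sum of those full summands whose underlying interval $I$ satisfies the defining formula. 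Since any pp-formula involves only finitely many indices of $R$, a descending chain of such subgroups is controlled by finitely many chains of images and kernels of $V$, all of which stabilise by hypothesis.

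The main obstacle is the final propagation step: DCC on a generating set of a modular lattice does not in general imply DCC on the sublattice it generates, so one must exploit the specific interaction between images, kernels and the total order on $R$---as encoded in Lemma~\ref{l:cutrho}---together with a sufficient description of the finitely presented objects of the persistence module category (essentially finite direct sums of half-open interval modules $k_{[s,r)}$ with $r\in R\cup\{\infty\}$), to pin down how an arbitrary descending chain of pp-subgroups of $V_t$ is ultimately determined by only finitely many image and kernel chains and hence must stabilise.
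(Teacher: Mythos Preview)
Your direction (ii)$\Rightarrow$(i) is essentially the paper's: identify $\Ima\rho_{ts}$ and $\Ker\rho_{rt}$ as subgroups of finite definition of $V_t\cong\Hom(k_{[t,\infty)},V)$ and specialise the DCC on pp-subgroups.

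For (i)$\Rightarrow$(ii), however, there is a genuine gap, and you have correctly located it yourself in the final paragraph. You attempt to verify condition~(3) of \cite[\S 3.5, Theorem~1]{CBlfp} (DCC on all pp-subgroups) by bootstrapping from DCC on images and kernels, but as you note, DCC on a generating family of a modular lattice does not propagate to the generated sublattice. Your proposed fix --- that a pp-formula involves only finitely many indices of $R$ --- does not help, because a descending \emph{chain} of pp-formulas may involve infinitely many indices in total, and there is no a priori reason the corresponding sets of ``surviving'' intervals in a decomposition of $V$ should stabilise. The detour through $V^{(\kappa)}$ adds nothing: if $V$ already satisfies DCC on images and kernels, Theorem~\ref{t:dcc} decomposes $V$ itself into interval modules, and the difficulty of passing from that to DCC on all pp-subgroups is unchanged.

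The paper sidesteps this entirely by verifying condition~(4) rather than condition~(3). The observation is that DCC on images and kernels passes to arbitrary \emph{products} $V^J$ (since images and kernels of $\rho_{ts}$ commute with products), so Theorem~\ref{t:dcc} applies to $V^J$ and writes it as a direct sum of interval modules. Since $\End(k_I)=k$ is local, every product of copies of $V$ is a direct sum of modules with local endomorphism ring, which is exactly condition~(4). No analysis of general pp-formulas is needed.
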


\begin{proof}
See \cite[\S 3.5, Theorem 1]{CBlfp} for
four equivalent conditions for $\Sigma$-pure-injectivity.
If (i) holds for $V$, then it holds for any product of copies of $V$, and so by
Theorem~\ref{t:dcc}, any product of copies of $V$ is a direct sum of interval modules.
Then (ii) follows by condition (4) of the cited theorem.
Conversely, suppose that (ii) holds. For $t\in R$ we can identify
$V_t$ with $\Hom(X,V)$ where $X$ is the interval module associated to the 
interval $I=\{s : s\ge t\}$, and $X$ is finitely presented since it is isomorphic to $\Hom(t,-)$.
Now it is easy to see that the spaces $\Ima \rho_{ts}$ and $\Ker \rho_{rt}$ 
for $s\le t\le r$ are subgroups of finite definition of $V_t$, so 
condition (3) of the cited theorem implies~(i).
\end{proof}

\section*{Acknowledgements}

I would like to thank Vin de Silva for introducing me to persistence modules,
Michael Lesnick for prompting me to write this paper and invaluable comments
on a first draft, and both of them for many stimulating questions.

\frenchspacing


\begin{thebibliography}{99}

\bibitem{CZCG}
G. Carlsson, A. Zomorodian, A. Collins and L. Guibas, 
Persistence barcodes for shapes,
Intl. J. Shape Modeling, 11 (2005), 149--187

\bibitem{CCSGGO}
F. Chazal, D. Cohen-Steiner, M. Glisse, L. J. Guibas and S. Y. Oudot,
Proximity of persistence modules and their diagrams,
in Proceedings of the 25th Annual Symposium on Computational Geometry,
Association for Computing Machinery, 2009, pp. 237--246.

\bibitem{CdSGO}
F. Chazal, V. de Silva, M. Glisse and S. Oudot,
The structure and stability of persistence modules,
preprint arXiv:1207.3674.

\bibitem{CdSO}
F. Chazal, V. de Silva, S. Oudot,
Persistence stability for geometric complexes,
preprint arXiv:1207.3885.

\bibitem{CBlfp}
W. Crawley-Boevey,
Locally finitely presented additive categories,
Comm. Algebra 22 (1994), 1641--1674.

\bibitem{Gh}
R. Ghrist, 
Barcodes: the persistent topology of data,
Bull. Amer. Math. Soc. 45 (2008), 61--75. 

\bibitem{G}
A. Grothendieck,
\'El\'ements de g\'eom\'etrie alg\'ebrique. III. \'Etude cohomologique des faisceaux coh\'erents. I.
Inst. Hautes \'Etudes Sci.\ Publ.\ Math.\ 11 (1961), 5--167.

\bibitem{L}
M. Lesnick, The optimality of the interleaving distance on multidimensional persistence modules,
preprint arXiv:1106.5305.

\bibitem{R}
C. M. Ringel,
The indecomposable representations of the dihedral 2-groups,
Math. Ann. 214 (1975), 19--34.

\bibitem{W}
C. Webb, 
Decomposition of graded modules,
Proc. Amer. Math. Soc. 94 (1985), 565--571. 

\bibitem{ZC}
A. Zomorodian and G. Carlsson,
Computing persistent homology,
Discrete Comput. Geom. 33 (2005), 249--274.

\end{thebibliography}
\end{document}